\documentclass[english]{ourlematema}
\usepackage[all,cmtip]{xy}
\usepackage[english]{babel}
\usepackage{amssymb}
\usepackage{graphicx}
\usepackage[T1]{fontenc}
\usepackage[utf8]{inputenc}
\usepackage{slashbox}

\usepackage{hhline}
\usepackage{xcolor}
\usepackage{shuffle}
\usepackage{wasysym}
\usepackage{amsmath,amsthm,amssymb,verbatim}
\usepackage{tikz}

\usepackage{float} 
\usepackage{caption}
\usepackage[pdftex, pdfusetitle, plainpages=false,  bookmarks, bookmarksnumbered,colorlinks, linkcolor=blue, citecolor=blue,filecolor=black, urlcolor=blue]{hyperref}

\usepackage{enumerate}
\usepackage{mathtools}
\usepackage{kbordermatrix}

\renewcommand{\:}{\colon}
\newcommand{\p}{\mathbb{P}}

\newcommand{\C}{\mathbb{C}}
\newcommand{\R}{\mathbb{R}}
\newcommand{\T}{\mathcal{T}}

\newcommand{\rk}{\mathop{\rm rank}\nolimits}

\newcommand{\Sym}{\mathop{{\rm Sym}}\nolimits}

\newcommand{\SO}{\mathop{\rm SO}\nolimits}
\newcommand{\GL}{\mathop{\rm GL}\nolimits}

\newtheorem{pro}[thm]{Proposition}
\newtheorem{lem}[thm]{Lemma}   

\theoremstyle{definition}

\newtheorem{es}[thm]{Example}

\newtheorem{rmk}[thm]{Remark}

\newcommand{\proj}[0]{\operatorname{Proj}}

\newcommand{\reg}{\mathrm{Reg}}
\newcommand{\irr}{\mathrm{Irr}}

\DeclareMathOperator{\Gr}{Gr}


\title{On the eigenpoints of cubic surfaces}

\MSC{15A18, 15A69}
\keywords{eigenpoints of tensors, cubic surfaces, Grassmannians}

\newcommand{\at}{\makeatletter @\makeatother}

\author{T. Ö. Çelik}
\address{%
Max Planck Institute for Mathematics in the Sciences, Leipzig, Germany\\
\email{celik\at mis.mpg.de}
}
\author{F. Galuppi}
\address{%
Max Planck Institute for Mathematics in the Sciences, Leipzig, Germany\\
\email{galuppi\at mis.mpg.de}
}
\author{A. Kulkarni}
\address{%
Technische Universit\"at Kaiserslautern,
Kaiserslautern, Germany\\
\email{avinash\at mis.mpg.de}
}

\author{M.-\c S. Sorea}
\address{%
Max Planck Institute for Mathematics in the Sciences, Leipzig, Germany\\
\email{sorea\at mis.mpg.de}
}

 \date{2020/04/13}


\begin{document}

\maketitle

\begin{abstract}
We show that the eigenschemes of $4 \times 4 \times 4$ symmetric tensors are parameterized by a linear subvariety of the Grassmannian $\Gr(3,\p^{14})$. We also study the decomposition of the eigenscheme into the subscheme associated to the zero eigenvalue and its residue. In particular, we describe the possible degrees and dimensions.
\end{abstract}

\section{Introduction}
The goal of this paper is to study the eigenpoints of order three tensors. The spectral theory of tensors is a multi-linear generalization of the study of eigenvalues, singular values, eigenvectors and singular vectors in the case of matrices. Starting with the works of Qi \cite{qi05} and Lim \cite{lim}, there has been steady progress and strong interest in the subject, both theoretically and in the applications to hypergraph theory, data analysis, automatic control, magnetic resonance imaging, higher order Markov chains, and optimization \cite{qi1, landsberg}.

Given a tensor $\mathcal{T}\in(\C^{n+1})^{\otimes 3}$ and a matrix $A\in\C^{n+1}\otimes\C^{n+1}$, the vector~$\T\cdot A$, defined by
\[(\T\cdot A)_k=\sum_{i=0}^n\sum_{j=0}^n \T_{ijk}A_{ij}
\]
is called the tensor contraction of $\T$ and $A$ with respect to the first and second axes. An analogous definition can be given for a different choice of two of the three axes. Therefore, a choice of two axes induces a quadratic map
	\[
	\begin{array}{rcccc}
		\mathcal{T} \: & \C^{n+1} & \rightarrow & \C^{n+1} \\
		& x & \mapsto & \mathcal{T} \cdot (x \otimes x).
	\end{array}
	\]An \emph{eigenvector} of the tensor $\mathcal{T}$, with respect to the chosen directions, is a non-zero vector $x \in \C^{n+1}$ such that $\mathcal{T}\cdot(x\otimes x) = \lambda x$ for some $\lambda \in \C$, and an \emph{eigenpoint} is the associated equivalence class in $\p^n$. As we point out in Definition~\ref{def:eigenscheme}, the condition of being an eigenpoint can be expressed as the vanishing of minors of a suitable matrix, thus giving the eigenpoints a scheme structure. This closed subscheme of $\p^n$ is the \emph{eigenscheme} of $\T$ with respect to the chosen axes. 

In this article, we fix the contraction to be along the first two directions and we use the terminology of eigenvector, eigenpoint, eigenscheme with the implicit reference to these axes. However, we stress that there are interesting relations between the eigenschemes associated to different directions of the same tensor; this phenomenon of eigencompatibility was studied in detail by Abo, Seigal, and Sturmfels in \cite[Section 3]{AboSeigalSturmfels}. 

With a fixed choice of axes, we may assume without loss of generality that $\mathcal{T} \in \Sym^2 \C^{n+1} \otimes \C^{n+1}$. Indeed, let $\tilde{\T}\in(\C^{n+1})^{\otimes 3}$ be the tensor obtained from $\T$ by switching the first and the second index. In other words, $(\tilde{\T})_{ijk}=\T_{jik}$. Then
\begin{align*}
(\tilde{\T}\cdot (x\otimes x))_k=\sum_{i=0}^n\sum_{j=0}^n \tilde{\T}_{ijk}x_ix_j
=\sum_{j=0}^n\sum_{i=0}^n \T_{jik}x_jx_i=(\T\cdot (x\otimes x))_k,
\end{align*}
hence $\T$ and $\tilde{\T}$ induce the same quadratic map. We refer to elements of $\Sym^2\C^{n+1} \otimes \C^{n+1}$ as \emph{partially symmetric tensors}. A \emph{symmetric tensor} is an element of $\Sym^3 \C^{n+1}$. The space $\Sym^3 \C^{n+1}$ of symmetric tensors is canonically isomorphic to the space $\C[x_0,\dots,x_n]_3$ of homogeneous cubic polynomials. The symmetric tensor $\T$ corresponds to the polynomial
\[
f=\sum_{i_0+\ldots+i_n=3} \T_ {i_0\ldots i_n}x_{i_0}\cdot\ldots\cdot x_{i_n}\in\C[x_0,\dots,x_n]_3.
\]
Equivalently, $\mathcal{T}$ defines a polynomial $f$ by $f(x)=\T \cdot (x \otimes x \otimes x)$.
Conversely, given a homogeneous cubic form $f$ in ${n+1}$ variables, each of the partial derivatives of $f$ is a quadratic form, which in turn can be viewed as an element of $\Sym^2 \C^{n+1}$; the tuple of quadratic polynomials
	$
		\left( \frac{1}{n} \frac{\partial f}{\partial x_0},	\dots, \frac{1}{n} \frac{\partial f}{\partial x_n} \right)
	$,
viewed as $(n+1) \times (n+1)$ symmetric matrices, defines a tensor in $\Sym^3 \C^{n+1}$. 

In Section \ref{sec:prelim} we provide the necessary definitions and the setup. 
In Section~\ref{sec: deg and dim} we look at the decomposition of the eigenscheme into the subscheme of eigenpoints with eigenvalue $0$ (the \emph{irregular eigenpoints}) and its residue (the \emph{regular eigenpoints}); we study the dimensions of the components in this decomposition. In Section~\ref{sec: zero dimensional} we focus on the degree of 0-dimensional regular eigenschemes of ternary and quaternary cubics. In Section~\ref{sec: grassmanian} we show that there is a natural bijection between $3$-planes in $\p^{14}$ satisfying linear constraints and eigenschemes of cubic surfaces.

\section{Preliminaries}\label{sec:prelim}

Fix a projective space $\p^n$ and denote by $\C[x_0, \ldots, x_n]$ its coordinate ring. We also fix $\p^{n+1} := \operatorname{Proj} \C[x_0,\ldots, x_n, \lambda]$. For convenience, we denote $x := (x_0,\ldots, x_n)$.

A partially symmetric tensor $\mathcal{T} \in \Sym^2 \C^{n+1} \otimes\C^{n+1}$ can be viewed as a tuple of $n+1$ quadratic forms $(q_0(x),\ldots, q_n(x))$ given by the contraction $\mathcal{T} \cdot~(x \otimes x)$, similar to how a symmetric tensor $\mathcal{T}$ defines a cubic form in $n+1$ variables by the contraction ${\mathcal{T} \cdot (x \otimes x \otimes x)}$. Equivalently, the quadratic forms are those associated to the $n+1$ symmetric matrices of size $(n+1)\times (n+1)$ obtained by slicing $\T$.

\begin{dfn}\label{def:eigenscheme}
Let $\mathcal{T} = (q_0, \ldots, q_n)$ be a partially symmetric tensor. Define the \emph{scheme of eigenpairs of $\mathcal{T}$} by
	\[
	\tilde{E}(\T)=V( q_0(x) - \lambda x_0, \ldots, q_n(x) - \lambda x_n)\subset\p^{n+1}=\operatorname{Proj} \C[x_0,\ldots, x_n, \lambda].
	\]
Observe that $[0,\dots,0,1]\in\tilde{E}(\T)$.  Let $\pi:\p^{n+1}\dashrightarrow \p^n$ be the projection from $[0,\dots,0,1]$. The image of $\pi$ is a closed subscheme of $\p^n$. The \emph{eigenscheme} of $\mathcal{T}$, denoted by $E(\T)$, is the image under $\pi$ of the residue of $ \tilde{E}(\T)$ with respect to $[0,\dots,0,1]$. 
Equivalently, $E(\T)\subset\p^n$ is
		the common vanishing set of the $2 \times 2$ minors of
			\[\left(
			\begin{matrix}
			x_0 & x_1 & \dots & x_n \\
			q_0 & q_1 & \dots & q_n
			\end{matrix}\right).
			\]
When $\T$ is symmetric, we can consider it as a homogeneous polynomial $f$. In this case $q_i=\frac{\partial f}{\partial x_i}$, and we  denote its eigenscheme by $E(f)$.
\end{dfn}

The first question we can ask about eigenpoints is whether they always exist. If so, we would like to know how many of them are there.

\begin{lemma}\label{lem:nonempty} Let $\T\in\Sym^2\C^{n+1}\otimes\C^{n+1}$ be a partially symmetric tensor.
\begin{enumerate}
\item \label{bullet: count general eigenpoints} If $\T$ is general, then ${E}(\T)$ consists of $2^{n+1}-1$ reduced points.
\item \label{bullet: at least an eigenpoint} $[0,\dots,0,1]$ is a smooth isolated point for $\tilde{E}(\T)$, and moreover $E(\T)\neq~\emptyset$. In particular, every smooth cubic polynomial has at least a regular eigenpoint.
\end{enumerate}
\begin{proof}
\begin{enumerate}
\item By B\'ezout's theorem, it suffices to show that the quadrics defining the eigenscheme are transverse. Since transversality is an open condition, it is enough to exhibit one example of a partially symmetric tensor with $2^{n+1}-1$ reduced eigenpoints. It is easy to check that the symmetric tensor $x_0^3+\ldots+x_n^3$ satisfies this requirement.
\item To be smooth and to be isolated are local properties, so we can work in the affine space $\C^n$ defined by $\lambda=1$. In this chart the point $[0,\dots,0,1]$ is the origin $p=(0,\dots,0)$ and the Jacobian of $\tilde{E}(\T)$ is the $(n+1)\times(n+1)$ matrix
\[J=
\left( \begin{matrix}
\frac{\partial q_0}{x_0}-1 & \frac{\partial q_0}{x_1} & \dots &\frac{\partial q_0}{x_n}\\
\frac{\partial q_1}{x_0} & \frac{\partial q_1}{x_1}-1 &  \ddots &\frac{\partial q_1}{x_n}\\
\vdots  & \ddots & \ddots & \frac{\partial q_{n-1}}{x_n}\\
\frac{\partial q_n}{x_0} & \frac{\partial q_n}{x_1} & \frac{\partial q_n}{x_{n-1}}&\frac{\partial q_n}{x_n}-1
\end{matrix}\right).
\]
Since $q_0,\dots,q_n$ are homogeneous, so are their derivatives. This implies that, up to a sign, $J(p)=J(0,\dots,0)$ is the identity matrix, hence it has maximal rank. This proves that $\tilde{E}(\T)$ is smooth at $p$. The tangent space to $\tilde{E}(\T)$ at $p$ is defined by $J(p)\cdot (x_0,\dots,x_n)^\top=0$, so it has equations $x_0=\ldots=x_n=0$. Therefore $T_p \tilde{E}(\T)=\{p\}$, hence $p$ is an isolated point for $\tilde{E}(\T)$.

Since $E(\T)$ is the image of $\tilde{E}(\T)$ under the projection from $p$, in order to show that it is not empty it is enough to show that $\tilde{E}(\T)$ contains at least a point outside $p$. Since $p\in\tilde{E}(\T)$, $\dim \tilde{E}(\T)\ge 0$. If it has positive dimension, we are done. In case it has dimension 0, by point (\ref{bullet: count general eigenpoints}) we have $\deg\tilde{E}(\T)\ge 2^{n+1}-1>1$. Since $\tilde{E}(\T)$ is smooth at $p$, it contains at least another point.\qedhere
\end{enumerate}
\end{proof}
\end{lemma}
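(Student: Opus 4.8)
For part~\eqref{bullet: count general eigenpoints} the plan is to combine Bézout's theorem with a semicontinuity argument. The scheme $\tilde E(\T)\subset\p^{n+1}$ is cut out by the $n+1$ quadrics $q_i(x)-\lambda x_i$, so whenever these meet transversally, $\tilde E(\T)$ is a reduced complete intersection of degree $2^{n+1}$. Transversality at a point is an open condition on the coefficients of $\T$, so it is enough to exhibit one tensor with $\tilde E(\T)$ reduced of length $2^{n+1}$; the Fermat tensor $x_0^3+\dots+x_n^3$, whose eigenpair equations are, up to nonzero scalars, $x_i(x_i-\lambda)=0$, will do. Its solutions are indexed by the subsets $S\subseteq\{0,\dots,n\}$ (put $x_i=\lambda$ for $i\in S$ and $x_i=0$ otherwise), and a direct Jacobian check shows each is reduced. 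Then I would pass from $\tilde E(\T)$ to $E(\T)$: the vertex $v=[0:\dots:0:1]$ always lies on $\tilde E(\T)$ (it is the solution $S=\emptyset$ in the Fermat case), and the projection $\pi$ from $v$ is injective on $\tilde E(\T)\setminus\{v\}$, since $q_i(x)=\lambda x_i=\lambda' x_i$ for all $i$ with $x\neq 0$ forces $\lambda=\lambda'$. Hence generically $E(\T)$ is the image of the $2^{n+1}-1$ reduced eigenpairs with $x\neq 0$, which is again $2^{n+1}-1$ reduced points.

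For part~\eqref{bullet: at least an eigenpoint} I would argue locally in the chart $\lambda=1$, where $v$ becomes the origin and $\tilde E(\T)$ is defined by $g_i:=q_i(x)-x_i$. Each $q_i$ is a quadratic form, so its partials are linear and vanish at the origin, whence the Jacobian $\bigl(\partial g_i/\partial x_j\bigr)=\bigl(\partial q_i/\partial x_j-\delta_{ij}\bigr)$ equals $-I$ there: $v$ is a smooth point of $\tilde E(\T)$, its tangent space is $\{0\}$, and so $v$ is an isolated reduced point. To see $E(\T)\neq\emptyset$ I split into cases. If $\dim\tilde E(\T)>0$, then because $v$ is isolated there is a positive-dimensional component avoiding $v$, hence a point of $\tilde E(\T)\setminus\{v\}$, which $\pi$ carries into $E(\T)$. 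If $\dim\tilde E(\T)=0$, then the $n+1$ quadrics form a regular sequence, so $\tilde E(\T)$ is a complete intersection of degree $2^{n+1}$; since $v$ contributes length one, there must be other points, again giving $E(\T)\neq\emptyset$. For the final assertion, note that for a cubic $f$ the irregular eigenpoints (eigenvalue $0$) are the common zeros of $\partial f/\partial x_0,\dots,\partial f/\partial x_n$, which by Euler's identity is the singular locus of $V(f)\subset\p^n$; when $f$ is smooth this is empty, so every point of the nonempty $E(f)$ is a regular eigenpoint.

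The step I expect to be the main obstacle is the transversality input in part~\eqref{bullet: count general eigenpoints}: one must verify carefully that the Fermat example is genuinely reduced of the full length $2^{n+1}$, so that openness of the transversality locus really forces the same count for a general $\T$, and that the projection from $v$ preserves both the number of points and their reducedness. By contrast, the local computation and the dimension case analysis in part~\eqref{bullet: at least an eigenpoint} should be routine.
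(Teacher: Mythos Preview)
Your proposal is correct and follows essentially the same route as the paper: Bézout plus openness of transversality with the Fermat cubic as witness for part~(1), and the Jacobian-at-the-origin computation in the chart $\lambda=1$ together with a dimension dichotomy for part~(2). Your treatment is in fact a touch more careful than the paper's in two places---you spell out why the projection from $v$ is injective on $\tilde E(\T)\setminus\{v\}$, and in the $0$-dimensional case you invoke the complete-intersection degree $2^{n+1}$ for $\tilde E(\T)$ directly rather than appealing back to part~(1)---but the underlying argument is the same.
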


Given a 0-dimensional subscheme of $\p^n$ of length $2^{n+1}-1$, we can ask how to detect whether it is the eigenscheme of a cubic. Each point has to satisfy the $\binom{n+1}{2}$ equations
\begin{equation}\label{eq: minors}
\left\{ \frac{\partial f}{\partial x_i} x_j - \frac{\partial f}{\partial x_j} x_i=0 : 0 \leq i<j \leq n \right\},
\end{equation}
whose indeterminates are the $\binom{n+3}{3}$ coefficients of $f$. These conditions are linear
. In order to have a solution, the matrix associated to the system of linear equations \eqref{eq: minors} cannot have maximal rank. Hence the length $2^{n+1}-1$ subscheme is the eigenscheme of a cubic if and only if the maximal minors of the $\binom{n+3}{3} \times (2^{n+1}-1)\binom{n+1}{2}$ 
matrix vanish. Although these conditions are complicated, they provide a computational way to check if $2^{n+1}-1$ given points are the points of an eigenscheme.

\begin{dfn} Let $\T=(q_0,\dots,q_n)$ be a partially symmetric tensor. The \emph{irregular eigenscheme} of $\T$ is the subscheme $\irr(\T)\subset\p^n$ defined by the ideal $(q_0,\dots,q_n)\subset\C[x_0,\dots,x_n]$. The residue of $E(\T)$ with respect to $\irr(\T)$ is called the \emph{regular eigenscheme} and denoted by $\reg(\mathcal{T})$. As a consequence, we can compute the ideal of $\reg(\T)$ as the saturation
	\[
		I(\reg(\T))=I\left(\overline{\reg(\T)}\right)= \left(I(E(\T)):I(\irr(\T)) \right).
	\]
\end{dfn}

To clarify the terminology, the regular eigenpoints are the points $p$ such that the rational map $(q_0,\dots,q_n) \: \p^n \dashrightarrow \p^n$ is both regular at $p$ and fixes $p$.
When $\T$ is a symmetric tensor, the regular eigenpoints of the associated cubic polynomial $f$ are the fixed points of the gradient map $\nabla f:\p^n\dashrightarrow\p^n$ defined by
\[p\mapsto \left[ \frac{\partial f}{\partial x_0}(p),\dots,\frac{\partial f}{\partial x_n}(p)\right]. \]
The closed points of the irregular eigenscheme are the singular points of the hypersurface $V(f) \subset \p^n$. 
It will be useful for this paper to consider what happens to the eigenscheme under the following  group action.
	\begin{dfn} \label{def: twisted action}
		Let $U \in \operatorname{GL}_{n+1}(\C)$ and let $\mathcal{T} := (q_0(x), \ldots, q_n(x) )$ be a partially symmetric tensor. We define the \emph{twisted action} of $U$ on $\mathcal{T}$ by
			\[
				\Psi_U \mathcal{T} := ( q_0( x U ) , \ldots, q_n(x U ) ) \cdot U^{-1}.
			\]
		i.e, $U$ acts on the quadrics by change of coordinates, then $U^{-1}$ acts by taking linear combinations of the slices of the tensor.
	\end{dfn}

\begin{rmk}\label{rmk:SO invariant}Let $\rho_{\mathrm{std}}$ be the standard representation of $\GL_{n+1}(\C)$. The action described in Definition~\ref{def: twisted action} defines the representation $\rho_{\mathrm{std}}^{\otimes 2} \otimes \rho_{\mathrm{std}}^\vee$. For the subgroup $\SO_{n+1}(\C)$, the action is equivalent to acting by orthogonal change of coordinates.
By \cite[Theorem 2.20]{qi1}, the eigenscheme is $\SO_{n+1}(\C)$-invariant. The next lemma shows what happens for the action of $\GL_{n+1}(\C)$.\end{rmk}

	\begin{lem} \label{lem: extended action on eigscheme} \label{thm:SO4 invariant}
		Let $U \in \operatorname{GL}_{n+1}(\C)$ and let $\mathcal{T} := (q_0(x), \ldots, q_n(x) )$ be a partially symmetric tensor. Then 
		$
			E\left( \Psi_U \mathcal{T} \right) = U^{-1}E(\mathcal{T}).
		$
	\end{lem}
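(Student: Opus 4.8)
The plan is to reduce the statement to the description of $E(\T)$ via the $2\times 2$ minors of the matrix
\[
N_{\T}(x):=\begin{pmatrix} x_0 & x_1 & \dots & x_n\\ q_0(x) & q_1(x) & \dots & q_n(x)\end{pmatrix}
\]
given in Definition~\ref{def:eigenscheme}, together with the elementary fact that the ideal $I_2(\,\cdot\,)$ generated by the $2\times 2$ minors of a $2\times(n+1)$ matrix over a commutative ring is unchanged when the matrix is multiplied on the right by an invertible $(n+1)\times(n+1)$ matrix.

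First I would unwind Definition~\ref{def: twisted action}. Writing $(\tilde q_0,\dots,\tilde q_n):=\bigl(q_0(xU),\dots,q_n(xU)\bigr)U^{-1}$ for the tuple of quadrics defining $\Psi_U\T$, and using that $(x_0,\dots,x_n)=(xU)\,U^{-1}$, a direct computation gives $N_{\Psi_U\T}(x)=N_{\T}(xU)\cdot U^{-1}$. Since $U^{-1}\in\GL_{n+1}(\C)\subset\GL_{n+1}\bigl(\C[x_0,\dots,x_n]\bigr)$, the Cauchy--Binet formula shows that every $2\times 2$ minor of $N_{\T}(xU)\,U^{-1}$ is a $\C$-linear combination of the $2\times 2$ minors of $N_{\T}(xU)$, and conversely via $U$; hence
\[
I\bigl(E(\Psi_U\T)\bigr)=I_2\bigl(N_{\T}(xU)\,U^{-1}\bigr)=I_2\bigl(N_{\T}(xU)\bigr).
\]

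It remains to identify this last ideal. It is the image of $I_2\bigl(N_{\T}(x)\bigr)=I\bigl(E(\T)\bigr)$ under the automorphism of $\C[x_0,\dots,x_n]$ induced by the linear substitution $x\mapsto xU$; geometrically this substitution is precisely the automorphism of $\p^n$ through which $U^{-1}$ acts, so its vanishing locus is $U^{-1}E(\T)$. Combining the two steps gives $E(\Psi_U\T)=U^{-1}E(\T)$ as schemes. As a reality check one can instead run the set-level argument: a nonzero vector $v$ satisfies $\bigl(q_0(vU),\dots,q_n(vU)\bigr)U^{-1}=\lambda v$ for some $\lambda\in\C$ if and only if $\bigl(q_0(vU),\dots,q_n(vU)\bigr)=\lambda\,vU$, i.e.\ $vU$ is an eigenvector of $\T$.

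I expect the only delicate point to be bookkeeping: keeping the substitution $x\mapsto xU$, the factor $U^{-1}$ appearing in $\Psi_U$, and the geometric left action of $U^{-1}$ on $\p^n$ mutually consistent with the chosen row/column conventions, so that it is $U^{-1}$ — and not $U$ or $U^{\top}$ — that appears in the final identity. Everything else is routine; the reason for phrasing the argument through the ideal of minors, rather than only through the underlying point set, is that it yields the statement scheme-theoretically.
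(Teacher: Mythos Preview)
Your argument is correct and follows essentially the same route as the paper: both hinge on the observation that right-multiplying the $2\times(n+1)$ matrix $N_{\T}$ by an invertible matrix leaves the rank-$\le 1$ locus (equivalently, the ideal of $2\times2$ minors) unchanged, followed by the linear substitution $y=xU^{-1}$. The only difference is packaging: the paper argues pointwise (``the minors vanish at $x$ iff \dots''), whereas you phrase everything at the level of the ideal $I_2(\cdot)$, which makes the scheme-theoretic equality explicit rather than implicit.
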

	
	\begin{proof}
The equations $\{ q_i(x) x_j - q_j(x) x_i : 0 \leq i,j \leq n \}$ vanish at $x$ if and only if the minors of
		\[
		\begin{pmatrix}
		(x_0 & x_1 & \dots & x_n) \cdot U^{-1} \\
		(q_0( x) & q_1(x) & \dots & q_n( x)) \cdot U^{-1}
		\end{pmatrix}
		\]
		also vanish. Setting $ y := (x_0, \dots, x_n) \cdot U^{-1}$, we have that the minors of
		\[
		\begin{pmatrix}
		y_0 & y_1 &\dots & y_n \\
		(q_0( y U) & q_1(y U) & \dots & q_n( y U)) \cdot U^{-1}
		\end{pmatrix}
		\]
		vanish if and only if $ y \in U^{-1}(x)$. The last system of equations defines the eigenscheme of $\Psi_UT$.
	\end{proof}

\section{Dimensions of the regular and irregular eigenschemes of cubics}\label{sec: deg and dim}
The eigenscheme of a cubic can exhibit a wide range of structure. For instance, it can be non-reduced or it can have components of different dimension.

\begin{es}\label{es:not pure dimension}Let $f=x_1(x_1x_2+x_3^2+x_0^2)$ and consider the conics $C_1=V(x_1-2x_2,x_0^2-4x_2^2+x_3^2)$ and $C_2=V(x_1,x_0^2+x_3^2)$. Explicit computations show that 
	\[
	E(f)=C_1\cup C_2 \cup \{[0,\sqrt{2},1,0],[0,-\sqrt{2},1,0]\}
	\]
	while $\irr(f)=C_2$. Hence $\reg(f)$ has components of both dimension 0 and 1.
\end{es}

\begin{es}\label{es:not reduced} Let $f=x_0(x_1^2+x_2^2+x_3^2)+x_1^3$ and consider the non-reduced curve $C$ defined by the ideal $(x_1^2,2x_0^2-x_2^2-x_3^2)$. Regular eigenpoints are dense in $C$ and $\overline{\reg(f)}\supset C$.
\end{es}

In this section, we describe some of the possibilities for the dimensions of $\reg(f)$ and $\irr(f)$.

\begin{pro} \label{pro: dimE and dimS} 
	Let $f\in\C[x_0,\dots,x_n]_3$ be a homogeneous cubic. Then
	\begin{enumerate}[(a)]
		\item $\dim \irr(f) + 1 \ge \dim \reg(f)$. In particular, $\dim\reg(f)=0$ whenever $f$ is smooth;
		\item $\dim \irr(f)=n-1$ if and only if $\irr(f)$ is a hyperplane. In this case $X$ contains a double hyperplane and $\reg(f)$ has either $0$, $1$, or $2$ closed points;
		\item if $\dim \reg(f)=n-1$, then $\dim \irr(f) = n-2$.

	\end{enumerate}
\end{pro}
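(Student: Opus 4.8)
The plan is to analyze the three statements by exploiting the linear-algebraic description of $E(f)$ via the $2\times 2$ minors of the matrix $M=\left(\begin{smallmatrix} x_0 & \cdots & x_n \\ q_0 & \cdots & q_n\end{smallmatrix}\right)$, where $q_i=\partial f/\partial x_i$, together with the Euler relation $\sum_i x_i q_i = 3f$. The key geometric observation is that a point $p$ lies in $E(f)$ if and only if the two rows of $M$ are proportional at $p$, i.e.\ either $q(p)=\lambda p$ for some scalar $\lambda$ (regular eigenpoint, if $p\notin\irr(f)$) or $q_0(p)=\cdots=q_n(p)=0$ (irregular eigenpoint). Thus set-theoretically $E(f)=\reg(f)\cup\irr(f)$, and $\irr(f)=V(q_0,\dots,q_n)=\Sing V(f)$.

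For part (a), the strategy is to stratify $\reg(f)$ by the value of the eigenvalue. Away from the locus where all $q_i$ vanish, one can organize $\reg(f)$ as the image of the incidence variety $\{(p,\lambda) : q_i(p)=\lambda x_i(p)\ \forall i\}\subset \p^n\times\A^1$; projecting to $\p^n$ is finite onto its image, so it suffices to bound the dimension of this incidence variety. Fixing $\lambda$, the fiber is cut out in $\p^n$ by the $n+1$ quadrics $q_i-\lambda x_i$; one shows that for $p$ a regular eigenpoint with eigenvalue $\lambda$ lying on a component $Z$ of $\reg(f)$ of dimension $d$, the nearby irregular locus must have dimension at least $d-1$. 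Concretely, I would differentiate the relations $q_i = \lambda x_i$ along $Z$: the Hessian $\mathrm{Hess}(f)$ evaluated along tangent directions to $Z$ must be constrained, and the Euler relation forces $q_i(p)x_j-q_j(p)x_i=0$ to propagate, producing enough vanishing of the $q_i$ on a subvariety of codimension at most one in $Z$. The smooth case is then immediate since $\irr(f)=\emptyset$ forces $\dim\reg(f)\le -1+1=0$, but actually one needs $\dim\reg(f)=0$, which follows because $E(f)$ is cut out by $n$ independent quadrics (after using Euler to drop one) and, when $f$ is smooth, these have no common zero locus of positive dimension — alternatively cite Lemma~\ref{lem:nonempty}\eqref{bullet: count general eigenpoints} together with upper semicontinuity is not quite enough, so I expect the honest argument here to go through the Jacobian computation as in Lemma~\ref{lem:nonempty}\eqref{bullet: at least an eigenpoint}, showing $E(f)$ is unramified over smooth $f$.

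For part (b), observe $\dim\irr(f)=n-1$ means $\Sing V(f)$ contains a hypersurface, and since $V(f)$ is a cubic, a hypersurface of singular points forces that hypersurface to be a hyperplane $H=V(\ell)$ with $\ell^2\mid f$, so $f=\ell^2 m$ for a linear form $m$ (this is the classical fact that the singular locus of a cubic hypersurface, if of codimension one, is a linear space contained with multiplicity two). Then one computes $q_i=\partial_i(\ell^2 m)=2\ell(\partial_i\ell)m+\ell^2(\partial_i m)$, so on $H$ all $q_i$ vanish, confirming $H\subseteq\irr(f)$; to find $\reg(f)$ one restricts the eigenpoint condition to $\p^n\setminus H$, where after dividing by $\ell$ the conditions $q_ix_j-q_jx_i=0$ become linear in $x$ with coefficients depending on $\ell,m$, cutting out a linear subspace which, intersected with the quadric coming from one remaining relation, yields at most $2$ points; the cases $0$, $1$, $2$ are then distinguished by the discriminant of that quadric, and I would give explicit small examples (or normal forms of $\ell,m$ up to $\GL$ using Lemma~\ref{lem: extended action on eigscheme}) realizing each.

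For part (c), the hypothesis $\dim\reg(f)=n-1$ means $\reg(f)$ contains a hypersurface $Y$; by part (a) we already get $\dim\irr(f)\ge n-2$. The remaining task is to exclude $\dim\irr(f)=n-1$ and $\dim\irr(f)=n$. The latter is impossible since $\irr(f)=\p^n$ would force $f=0$. For the former, I would argue by contradiction using part (b): if $\dim\irr(f)=n-1$ then $f=\ell^2 m$ and we have just shown $\reg(f)$ consists of at most $2$ points, contradicting $\dim\reg(f)=n-1\ge 1$ (here $n\ge 2$ since $n=1$ is degenerate and handled separately). Hence $\dim\irr(f)=n-2$ exactly.

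\textbf{Main obstacle.} The hardest step is the inequality in part (a), specifically producing the codimension-one irregular locus near a positive-dimensional component of $\reg(f)$; the naive dimension count on the space of quadrics $q_i-\lambda x_i$ is not automatically sharp, so the argument must genuinely use the special structure of gradient tensors — namely that the $q_i$ are partials of a single cubic, so the Hessian is symmetric and the Euler identity holds. Getting the differential-geometric propagation argument clean, rather than relying on coordinates, is where I expect most of the work to lie.
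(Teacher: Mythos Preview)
Your outlines for (b) and (c) are essentially the paper's argument. For (b) the paper's justification that $\irr(f)$ must be a hyperplane is via secants (any line through two singular points meets the cubic with multiplicity $\ge 4$, hence lies in it, so the secant variety of $\irr(f)$ is contained in $V(f)$), after which it normalizes to $f=x_0^2(a_0x_0+\cdots+a_nx_n)$ and solves the eigensystem to a quadratic in $x_0$; your $f=\ell^2 m$ approach is equivalent. Part (c) is identical.

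The real divergence is in (a). You already write down the correct object --- the incidence variety $\{(p,\lambda): q_i(p)=\lambda p_i\ \forall i\}$, which is nothing but $\tilde E(f)\subset\p^{n+1}$ from Definition~\ref{def:eigenscheme} --- but then you fiber over $\lambda$ and try to propagate vanishing of the $q_i$ along a component of $\reg(f)$ via the Hessian. That differential argument is not needed, and you correctly flag it as the obstacle. The paper's observation is that in this same incidence picture, $\irr(f)$ is \emph{exactly} the hyperplane section $\tilde E(f)\cap\{\lambda=0\}$. Intersecting any projective scheme with a hyperplane drops the dimension by at most one, so
\[
\dim\irr(f)=\dim\bigl(\tilde E(f)\cap\{\lambda=0\}\bigr)\ \ge\ \dim\tilde E(f)-1\ \ge\ \dim E(f)-1\ \ge\ \dim\reg(f)-1,
\]
and the inequality is proved in one line. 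No symmetry of the tensor, no Euler relation, no Hessian is used --- so your remark that part (a) ``must genuinely use the special structure of gradient tensors'' is in fact false; the bound holds for arbitrary partially symmetric $\mathcal T$. Once you see that $\irr$ is the $\lambda=0$ slice of the eigenpair scheme, the rest of your plan for (a) can be discarded.
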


\begin{proof}
\begin{enumerate}[(a)]
	\item 
		Let $H$ be the hyperplane of $\p^{n+1}=\proj\C[x_0,\dots,x_n,\lambda]$ defined by $\lambda=0$. Let $\pi:\p^{n+1}\dashrightarrow\p^n$ be the projection from the point $[0,\dots,0,1]$. Since all fibers of $\pi$ have dimension 1, we have:
		\begin{alignat*}{4}
			\dim \irr(f) &= \dim(\pi^{-1} \irr(f))-1 &&= \dim \left( \pi^{-1}E(f)\cap H \right) -1\\
			&\ge\dim(\pi^{-1}E(f))-2 &&= \dim E(f)-1\ge\dim \reg(f)-1.
		\end{alignat*}
	If $f$ is smooth, then this implies $\dim\reg(f)\le 0$. We already know that $\dim\reg(f)\neq-1$ by Lemma \ref{lem:nonempty}(\ref{bullet: at least an eigenpoint}).
	\item  Let $X = V(f)\subset\p^n$ be the projective cubic hypersurface defined by $f$.
	For every pair of points $s_1 , s_2 \in\irr(f)$, the line $\langle s_1,s_2\rangle$ intersects $X$ with multiplicity at least $4$, so it is contained in $X$ by B\'{e}zout's theorem. This means that $X$ contains the secant variety of $\irr(f)$. Assume $\dim \irr(f) = n-1$. If $\irr(f)$ was not supported on a hyperplane, then its secant variety  would be the whole $\p^n$, contradiction. The converse is clear.
	
	We now determine the number of regular eigenpoints in this case. By Remark \ref{rmk:SO invariant}, we may assume that $\irr(f) =V(x_0^r)$ for some $r \geq 2$ up to an orthogonal transformation. We can write $f=x_0^2(a_0x_0+\ldots+a_{n}x_n)$, and the eigenscheme is defined by
		\[
		\begin{rcases}
		\begin{dcases}
			2x_0(a_0x_0+\ldots+a_nx_n)+a_0 x_0^2=\lambda x_0 \\
			a_1x_0^2=\lambda x_1, \quad a_2x_0^2=\lambda x_2, \quad  \ldots, \quad a_nx_0^2=\lambda x_n
		\end{dcases}
		\end{rcases}
		. 
		\]
	If $\lambda \neq 0$, then up to scaling we may assume $\lambda=1$, so we have
		\begin{equation*}
		\begin{rcases}
		\begin{dcases}
			2x_0(a_0x_0+\ldots+a_nx_n)+a_0 x_0^2= x_0 \\
			a_1x_0^2 = x_1, \quad \ldots, \quad a_nx_0^2= x_n
		\end{dcases}
		\end{rcases}.
		\end{equation*}
	We see that ${\left((a_1^2 + \ldots + a_n^2 ) x_0^2 + 3a_0 x_0 - 1\right) x_0 = 0}$ by eliminating $x_1, \ldots, x_n$ from the first relation. The only solution when $x_0=0$ is $[0,\dots,0,1]$. The other factor has at most $2$ solutions in $x_0$, so the claim follows.
	\item 
	From (a), we see $\dim \irr(f) \geq n-2$. From part (b), $\dim \irr(f) < n-1$. \qedhere
\end{enumerate}

\end{proof}

\newcommand{\longexampleone}{ \begin{array}{rl} &\phantom{+} x_0(x_1^2 - x_2^2 - x_3^2) \\ & + (\theta x_1 + i x_2 + x_3)^3 \end{array}}

\newcommand{\longexampletwo}{ \begin{array}{rl} &\phantom{+} 3x_0(x_1^2+x_2^2) \\ &+ (x_1+ix_2)^3 \end{array}}

The bounds from Proposition \ref{pro: dimE and dimS} on the dimensions of $\irr(f)$ and $\reg(f)$ are optimal for ternary and quaternary cubics. For any $\delta, \epsilon\in \{-1, 0,1\}$ satisfying these requirements, Table~\ref{tab:dims-ternary} gives an example  of a ternary cubic $f$ such that $\dim \reg(f) = \delta$ and $\dim \irr(f) = \epsilon$. Table~\ref{tab:dims-quaternary} gives examples of quaternary cubics for any admissible $\delta, \epsilon \in \{-1, 0,1, 2 \}$.

\begin{table}[H]
	\centering
	\renewcommand*{\arraystretch}{1.4}
	\captionof{table}{Dimensions of the regular and irregular eigenschemes for plane cubics. Here, $\delta:=\dim \reg(f)$, $\epsilon:=\dim \irr(f)$, and $i$ denotes the element such that $i^2=-1$.}
	\begin{tabular}{|c|ccc|}
		\hline
		\backslashbox{$\delta$ \kern-1em}{$\!\!\epsilon$ \kern-1em}& -1&0&1 \\ \hline		
		-1 & $\emptyset$ & $3x_0(x_1^2+x_2^2) + (x_1+ix_2)^3$ & $x_0^2(x_1+ix_2)$ \\ \hline 
		0 &  $x_0^3+x_1^3+x_2^3$ & $x_0^3+x_1^3$ & $ x_0^3$ \\[1.1ex] \hline
		1 & $\emptyset$  & $x_0(x_1^2+x_2^2)$ & $\emptyset$  \\ \hline
	\end{tabular}
	 \label{tab:dims-ternary} 
\end{table}

\begin{table}[H]
	\centering
		\renewcommand*{\arraystretch}{1.4}
		\captionof{table}{Dimensions of the regular and irregular eigenschemes of cubic surfaces. Here, $\delta:=\dim \reg(f)$, $\epsilon:=\dim \irr(f)$, while $i$, respectively $\theta$ denote elements such that $i^2=-1$, respectively $\theta^6 = -8/9$.} 
		\begin{tabular}{|c|cccc|}
		\hline
		\backslashbox{$\delta$ \kern-1em}{$\!\!\epsilon$ \kern-1em}& -1&0&1& 2\\ \hline		
			-1 & $\emptyset$ & $\longexampleone$ & $\longexampletwo$ & $x_0^2(x_1+ix_2)$ \\ \hline 
			0 & $\sum_{j=0}^3 x_j^3$ & $x_0^3+x_1^3+x_2^3$ & $x_0^3+x_1^3$ & $ x_0^3$ \\[1.1ex] \hline
			1 & $\emptyset$ & $ \sum_{j=1}^3 x_0x_j^2+x_1^3$ & $x_0(x_1^2+x_2^2)$ & $\emptyset$  \\ \hline
		    2 & $\emptyset$ & $\emptyset$ & $x_0(x_1^2+x_2^2+x_3^2)$ & $\emptyset$  \\ 
		\hline
		\end{tabular}
		\label{tab:dims-quaternary} 
\end{table}

Notice that there are partially symmetric tensors whose eigenscheme has dimension $3$.
\begin{lem}\label{lem:trivial}
	Let $\mathcal{T} = (q_0, \ldots, q_n)\in\Sym^2\C^{n+1}\otimes\C^{n+1}$. Then $E(\T)=\p^n$ if and only if there is a linear form $\ell$ such that $q_i = \ell x_i$ for every $i\in\{0,\dots,n\}$.
\end{lem}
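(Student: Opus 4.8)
The plan is to prove both implications directly from the minor description of $E(\T)$ in Definition~\ref{def:eigenscheme}. Recall that $E(\T) = \p^n$ means that every point of $\p^n$ is a zero of all the $2\times 2$ minors $q_i(x)x_j - q_j(x)x_i$, i.e.\ these $\binom{n+1}{2}$ cubic forms vanish identically as polynomials in $\C[x_0,\dots,x_n]$.

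The backward implication is the easy one: if $q_i = \ell x_i$ for a fixed linear form $\ell$, then $q_i x_j - q_j x_i = \ell x_i x_j - \ell x_j x_i = 0$ identically, so every point of $\p^n$ lies in $E(\T)$. For the forward implication, I would first dispose of the trivial case: if some $q_i \neq 0$, fix such an index, say $i=0$ (relabelling if necessary). From $q_0 x_j - q_j x_0 \equiv 0$ for all $j$, and using that $\C[x_0,\dots,x_n]$ is a UFD, I argue that $x_0 \mid q_0 x_j$ forces $x_0 \mid q_0$ or $x_0 \mid x_j$; since the latter fails for $j \neq 0$, we get $x_0 \mid q_0$, so $q_0 = x_0 \ell$ for some linear form $\ell$ (as $q_0$ is a quadric). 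Substituting back, $x_0 \ell x_j = q_j x_0$, and cancelling $x_0$ (legitimate in the polynomial ring, which is a domain) gives $q_j = \ell x_j$ for every $j \neq 0$; this relation also holds for $j=0$ by construction. If instead all $q_i = 0$, we may take $\ell = 0$, which is still a linear form, and the conclusion holds vacuously; alternatively one notes this is the case $\ell=0$ of the statement.

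The one subtlety to handle carefully is the UFD/cancellation argument: I want to make sure cancelling $x_0$ is justified not just set-theoretically but at the level of ideals, since $E(\T)$ is a scheme. But the lemma as stated is about $E(\T) = \p^n$ as schemes, which (since $\p^n$ is reduced and irreducible) is equivalent to the defining minors being zero in the polynomial ring — so working with polynomial identities is exactly right, and the cancellation is valid because $\C[x]$ is an integral domain. I do not expect any real obstacle here; the main point is simply to organize the case split cleanly (some $q_i \neq 0$ versus all $q_i = 0$) and invoke unique factorization to extract the common factor $x_0$ from $q_0$.
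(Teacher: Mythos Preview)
Your proposal is correct and follows essentially the same approach as the paper: both directions are handled via the matrix of minors, and the forward implication is deduced from unique factorization in $\C[x_0,\dots,x_n]$. The paper's proof is terser---it simply says the result ``follows from the fact that $\C[x_0,\ldots,x_n]$ is a unique factorization domain''---whereas you spell out the divisibility and cancellation argument explicitly, but the underlying idea is identical.
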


\begin{proof}Consider the matrix
	\[
			\left( \begin{matrix}
			x_0 & x_1 & \dots & x_n \\
			q_0 & q_1 & \dots & q_n
			\end{matrix}\right) \]
		as in Definition \ref{def:eigenscheme}. Assume that $q_i = \ell x_i$ for every $i\in\{0,\dots,n\}$. Then we are dealing with
	\[
			\left( \begin{matrix}
			x_0 & x_1 & \dots & x_n \\
			\ell x_0 & \ell x_1 & \dots & \ell x_n
			\end{matrix}\right) ,\]
			which has rank at most 1 for every $x\in\p^n$. Conversely, if $x_iq_j - x_jq_i$ is identically zero for every $i,j\in\{0,\dots,n\}$, then the result follows from the fact that $\C[x_0,\ldots, x_n]$ is a unique factorization domain.
\end{proof}

One can understand the regular eigenscheme of cubic cones in $\mathbb{P}^n$ by studying the eigenscheme of cubics in $\mathbb{P}^{n-1}$. In general, Lemma~\ref{lemma: reduce dimension of eigensystem} shows how examples in lower dimensions help fill in the classification of possible strata for higher dimensional tensors. If $V(f) \subset \p^n$ is a cone over a plane cubic curve, then up to a $\SO_{n+1}(\C)$ transformation we may assume that $f$ satisfies the hypothesis of Lemma~\ref{lemma: reduce dimension of eigensystem}.

\begin{lem} \label{lemma: reduce dimension of eigensystem}
	Let $f\in\C[x_0,\dots,x_n]_3$ be a homogeneous cubic such that $\frac{\partial f}{\partial x_n}=0$. Let $\phi:\p^{n-1}\rightarrow\p^n$ be the embedding of $\p^{n-1}$ as the hyperplane $x_n=0$ in $\p^n$.
Let $\tilde{f}\in\C[x_0,\dots,x_{n-1}]_3$ be the cubic defined by
	\[\tilde{f}(x_0,\dots,x_{n-1})=f(x_0\dots,x_{n-1},0).
	\]
	Then $\reg(f)=\phi(\reg(\tilde{f}))$.
	
\end{lem}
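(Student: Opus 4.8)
The plan is to unwind the definitions and show that the polynomial equations cutting out $E(f)$ in $\p^n$ restrict, along the hyperplane $x_n=0$, to the equations cutting out $E(\tilde f)$ in $\p^{n-1}$, and separately that no regular eigenpoint of $f$ can lie off that hyperplane. First I would record the key consequence of the hypothesis $\partial f/\partial x_n = 0$: the cubic $f$ involves only $x_0,\dots,x_{n-1}$, so $q_i := \partial f/\partial x_i$ is a quadric in $x_0,\dots,x_{n-1}$ for $i<n$, and $q_n = 0$. Thus $f = \tilde f$ as polynomials (after the identification $x_n \mapsto 0$) and $\partial \tilde f/\partial x_i = q_i|_{x_n=0} = q_i$ for $i<n$. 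The defining $2\times 2$ minors of $\big(\begin{smallmatrix} x_0 & \cdots & x_n \\ q_0 & \cdots & q_n\end{smallmatrix}\big)$ split into three families: $x_iq_j - x_jq_i$ for $i<j<n$; $x_n q_i - x_i q_n = x_n q_i$ for $i<n$; and $x_n q_n - x_n q_n = 0$.

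The containment $\phi(\reg(\tilde f)) \subseteq \reg(f)$ is the easy direction: a regular eigenpoint $[p_0:\dots:p_{n-1}]$ of $\tilde f$ is, by definition, a point where the gradient map of $\tilde f$ is defined and fixes the point, i.e. $(q_0(p),\dots,q_{n-1}(p)) = \mu(p_0,\dots,p_{n-1})$ for some $\mu$ not all entries vanishing, and not all $q_i(p)=0$. Setting $x_n=0$, all minors $x_iq_j-x_jq_i$ ($i<j<n$) vanish, the minors $x_nq_i$ vanish because $x_n=0$, and the last is identically zero; and since the $q_i(p)$ are not all zero, this point lies in $\reg(f)$, not merely in $\irr(f)$. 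For the reverse inclusion I would argue that $\reg(f) \subseteq \phi(\p^{n-1})$: suppose $p=[p_0:\dots:p_n]$ is a regular eigenpoint of $f$ with $p_n\neq 0$. The minor $x_n q_i = 0$ forces $q_i(p)=0$ for all $i<n$; together with $q_n=0$ this says the gradient of $f$ vanishes at $p$, so $p\in\irr(f)$ and $p$ is not a regular eigenpoint — contradiction. Hence $p_n=0$, so $p=\phi(p')$ for a unique $p'\in\p^{n-1}$, and then the surviving minors $x_iq_j-x_jq_i$ ($i<j<n$) say exactly that $p'\in E(\tilde f)$, while the non-vanishing of some $q_i(p)$ says $p'\notin\irr(\tilde f)$, i.e. $p'\in\reg(\tilde f)$.

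To make this a proof of scheme equality rather than just set equality, I would upgrade the above to the level of ideals: $I(E(f))$ is generated by the three families of minors above, and one checks that modulo the ideal $(x_n)$ the generators $x_n q_i$ and $x_n q_n - x_n q_n$ die, leaving precisely the generators of $I(E(\tilde f))$ inside $\C[x_0,\dots,x_{n-1}]$; and the irregular ideal $(q_0,\dots,q_n) = (q_0,\dots,q_{n-1})$ likewise restricts to $I(\irr(\tilde f))$. Since $\phi$ is a closed immersion with ideal $(x_n)$, and since the scheme-theoretic image of $E(f)$ under $\phi^{-1}$ is cut out by $(I(E(f)) + (x_n))/(x_n)$, taking the saturation $(I(E(f)):I(\irr(f))^\infty)$ commutes with this restriction once one knows $E(f)\subseteq \phi(\p^{n-1})$ scheme-theoretically — which follows because $x_n$ itself lies in the saturation: indeed $x_n \cdot q_i = $ a minor $\in I(E(f))$ for each $i$, so $x_n \in (I(E(f)):I(\irr(f)))$. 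I expect the main obstacle to be exactly this last point — being careful that the saturation defining $\reg$ interacts correctly with the restriction to $x_n=0$, i.e. that $\overline{\reg(f)}$ has no embedded or extra components meeting $x_n=0$ improperly — but the observation $x_n\in I(\reg(f))$ resolves it cleanly, reducing everything to the honest polynomial identity $I(\reg(f)) = I(\reg(\tilde f)) + (x_n)$ in $\C[x_0,\dots,x_n]$.
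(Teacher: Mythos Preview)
Your argument is correct and follows essentially the same route as the paper: the key step in both is the contradiction showing that a regular eigenpoint with $p_n\neq 0$ would force all $q_i(p)=0$ and hence lie in $\irr(f)$. Your treatment is in fact more thorough than the paper's, which handles the identification with $\reg(\tilde f)$ in one sentence and does not spell out the scheme-theoretic equality; your observation that $x_n q_i$ is a minor, hence $x_n\in\bigl(I(E(f)):I(\irr(f))\bigr)$, is exactly what is needed to justify that step.
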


\begin{proof}
 Let $p=[p_0,\dots, p_n]\in\reg(f)$. Then
	\[
	\rk
	\begin{pmatrix}
	\frac{\partial f}{\partial x_0}(p) & \dots & \frac{\partial f}{\partial x_{n-1}}(p) & 0\\		p_0 &\dots & p_{n-1} & p_n		
	\end{pmatrix} \le 1.
	\]
	First we prove that $p_n=0$. Assume by contradiction that $p_n \neq 0$. By hypothesis all the minors vanish, so $\frac{\partial f}{\partial x_i}(p)=0$ for every $i\in\{0,\dots,n\}$ and thus $p\in\irr(f)$, contradiction. Hence $p_n=0$. By omitting the last column of the matrix above we see that the conditions defining $\reg(f)$ are the same defining the intersection of $\reg(\tilde{f})$ with the hyperplane $x_n=0$.
\end{proof}

\section{Zero-dimensional regular eigenschemes}\label{sec: zero dimensional}
Even if the general $f\in\C[x_0,\dots,x_n]_3$ has $2^{n+1}-1$ regular eigenpoints, some cubics have less. The first problem we want to tackle is whether it is possible to find a cubic with a prescribed number of regular eigenpoints. Moreover, it is interesting to check if we can realize all the regular eigenpoints on $\R$, instead of $\C$. We can answer these questions for both ternary and quaternary cubics.

\begin{thm} \label{thm: zero-dimensional-configs} Let $f\in\C[x_0,\dots,x_n]_3$. If $\dim \reg(f) \leq 0$, then $f$ has at most $2^{n+1}-1$ regular eigenpoints. Moreover
	\begin{enumerate}
	\item for every $t\in \{0,1,\dots,7\}$ there exists a ternary cubic $f$ such that $\reg(f)$ is reduced and consists of $t$ real points;
	\item for every $t\in \{0,1,\dots,15\}$ there exists a quaternary cubic $f$ such that $\reg(f)$ is reduced and consists of $t$ real points.
	\end{enumerate} 
\end{thm}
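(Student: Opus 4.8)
The plan is to prove the upper bound by a B\'ezout-type argument and then to handle the two enumeration statements by explicit families of cubics, using the lower-dimensional examples already assembled to bootstrap.

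For the bound: since $\dim\reg(f)\le 0$, the regular eigenscheme is $0$-dimensional (or empty), and $\reg(f)\subset E(\T)\subset\p^n$ is cut out, set-theoretically, by the $2\times2$ minors of the $2\times(n+1)$ matrix in Definition~\ref{def:eigenscheme}. Equivalently, looking at $\tilde E(\T)\subset\p^{n+1}$, the regular eigenpoints are the images under $\pi$ of points of $\tilde E(\T)\setminus\{[0,\dots,0,1]\}$; by B\'ezout, $\deg\tilde E(\T)\le 2^{n+1}$ when it is finite (it is the intersection of $n+1$ quadrics $q_i-\lambda x_i$), and by Lemma~\ref{lem:nonempty}(\ref{bullet: at least an eigenpoint}) the point $[0,\dots,0,1]$ is a smooth isolated point, so it accounts for length exactly $1$. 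Hence the residue has length at most $2^{n+1}-1$, and projecting away from $[0,\dots,0,1]$ can only decrease the number of points, giving at most $2^{n+1}-1$ regular eigenpoints. One must be slightly careful when $\tilde E(\T)$ has a positive-dimensional component: then that component must lie in the hyperplane $\lambda=0$ (otherwise $\reg(f)$ would be positive-dimensional), so it maps into $\irr(f)$ and contributes nothing to $\reg(f)$; the finitely many remaining points still satisfy the same length bound on any zero-dimensional piece.

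For parts (1) and (2): the idea is to exhibit a one- (or two-)parameter family $f_c$ of cubics, defined over $\R$, whose regular eigenscheme is always reduced and whose number of \emph{real} eigenpoints increases by one (or by a controlled amount) as a real parameter crosses certain thresholds, using the fact that non-real eigenpoints come in conjugate pairs so the count changes parity-consistently. A clean way to get reducedness and control: start from $f=x_0^3+\dots+x_n^3$, which by Lemma~\ref{lem:nonempty}(\ref{bullet: count general eigenpoints}) (and the explicit verification there) has all $2^{n+1}-1$ eigenpoints reduced and real, then perturb coefficients; alternatively, deform $\sum x_j^3$ toward a cubic with some singularities (pushing eigenpoints into $\irr(f)$, which removes them from $\reg(f)$) to lower the count, and independently rotate pairs of coordinates by a complex angle to push real eigenpoints off $\R$ in conjugate pairs. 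For the ternary case one explicitly computes the eigenscheme of $a x_0^3 + b x_1^3 + c x_2^3 + (\text{small correction})$; for the quaternary case Lemma~\ref{lemma: reduce dimension of eigensystem} lets us take a cone over a ternary example to realize any $t\le 7$, and then a genuinely four-variable family is needed only to reach $t\in\{8,\dots,15\}$. Since $2^{n+1}-1$ is odd, $t$ can be any integer in the stated range only if both even and odd values are attainable; this is exactly why one needs both a ``remove a real point into the singular locus'' move and a ``send a conjugate pair off $\R$'' move.

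The main obstacle is the realizability of \emph{every} intermediate value of $t$ with a \emph{reduced} scheme: it is easy to write down a family interpolating between the all-real configuration and configurations with many complex points, but controlling that exactly the right number of eigenpoints stay real, that none collide (reducedness), and that the two basic moves can be combined freely without interfering, requires a careful choice of normal forms and some explicit Gr\"obner-basis or resultant computation in each of the finitely many coefficient families. I expect the write-up to proceed by tabulating explicit cubics — as in Tables~\ref{tab:dims-ternary} and~\ref{tab:dims-quaternary} — one representative for each value of $t$, with the verification reduced to a symbolic computation, rather than by a single uniform deformation argument.
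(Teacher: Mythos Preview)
Your proposal is essentially aligned with the paper's proof, which is extremely short: for the bound it invokes B\'ezout exactly as you do (``the eigenscheme of a general cubic consists of $2^{n+1}-1$ points; if there are more, the dimension increases by B\'ezout''), and for existence it simply refers to a table of explicit cubics, one for each value of $t$, verified by computer.

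Two remarks. First, your discussion of the positive-dimensional case is more careful than the paper's: you correctly isolate the subtlety that when $\irr(f)$ is positive-dimensional, $\tilde E(f)$ is not a proper intersection and B\'ezout does not directly bound the length of its zero-dimensional residue. Your claim that the remaining isolated points ``still satisfy the same length bound'' is what one expects from refined B\'ezout (Fulton), but neither you nor the paper actually justifies this; the paper simply does not address it. Second, your deformation heuristics for part~(2) --- cones via Lemma~\ref{lemma: reduce dimension of eigensystem}, rotating conjugate pairs off $\R$, pushing points into $\irr(f)$ --- are plausible strategies for \emph{finding} examples, but the paper does not use any of this machinery: it just writes down sixteen explicit cubics and checks each one symbolically. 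You correctly anticipate this in your final paragraph, and indeed that tabulation is the entire content of the paper's proof of parts~(1) and~(2). The deformation discussion, while not wrong, is not needed and would not by itself constitute a proof without the explicit verification.
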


\begin{proof} From Section \ref{sec:prelim}, the eigenscheme $\tilde{E}(f)\subset\p^n$ of a general cubic consists of $2^{n+1}-1$ points. If there are more, the dimension increases by B\'ezout's theorem.
To prove the second statement, we exhibit examples in Table~\ref{tab:nbEigep}.
\end{proof}

\begin{table}[h]
	\renewcommand*{\arraystretch}{1.1}
	\captionof{table}{Examples of ternary and quaternary cubics with a prescribed number of eigenpoints. All of them are real
. The \texttt{Macaulay2} \cite{M2} and \texttt{Magma} \cite{Magma} scripts  to check the examples are available at \cite{stefana-website}.}
	\begin{minipage}[b]{0.45\linewidth}
		\begin{tabular}{|cc|}
		\hline
			$\# \reg(f)$  & $f$  \\ \hline
			$0$ & $x_0^2(x_1+ix_2)$ \\ \hline
			$1$ & $x_0^3$  \\ \hline
			$2$ & $x_1^2 x_2$  \\ \hline
			$3$ & $x_0^3+x_1^3$  \\ \hline
			$4$ & $x_0 x_1 x_2$  \\ \hline
			$5$ & $x_0^3+x_1^2 x_2$  \\ \hline
			$6$ & $x_0^2 x_1+x_0^2 x_2+x_1 x_2^2$  \\ \hline
			$7$ & $x_0^3+x_1^3+x_2^3$ \\ \hline
\end{tabular}
\end{minipage}
\begin{minipage}[b]{0.45\linewidth}
\begin{tabular}{|cc|}
\hline
$\# \reg(f)$  & $f$  \\ \hline
$8$ & $x_0^2 x_1+x_2^2 x_3$   \\ \hline
			$9$ & $x_0 x_1 x_2+x_3^3$  \\ \hline
			$10$ & $x_0 x_1 x_2+x_0 x_3^2+x_1 x_2^2$ \\ \hline
			$11$ & $x_0^3+x_1^2 x_2+3 x_3^3$  \\ \hline
			$12$ & $10x_1x_2^2-x_0^2x_1-x_0^2x_2-x_0x_3^2$  \\ \hline
			$13$ & $x_0^2 x_1+x_0^2 x_2+x_1 x_2^2+x_3^3$  \\ \hline
			$14$ & $x_0x_3^2+x_0x_1x_2+x_1^3+10x_1x_2^2+x_2^3$  \\ \hline
			$15$ & $x_0^3+x_1^3+x_2^3+x_3^3$ 	\\		 
		\hline
\end{tabular}
\end{minipage}
 \label{tab:nbEigep}
\end{table}

Many other interesting behaviours appear, such as collinear, triangular or tetrahedral configurations. 
 
\begin{es}\label{example: rank 1, rank 2 and nondegenerate rank 3}	
	The regular eigenpoints of $x_0^3+x_1^3+x_2^3$ are
	\[
	[1,0,0,0], [0,1,0,0], [0,0,1,0], [1,1,0,0], [1,0,1,0], [0,1,1,0], [1,1,1,0].
	\]
	They are coplanar, in the configuration described by Figure~\ref{picture: configuration of 7 coplanar eigenpoints}.
\end{es}

\begin{figure}[h]
	\begin{center}		
		\includegraphics*[scale=0.6]{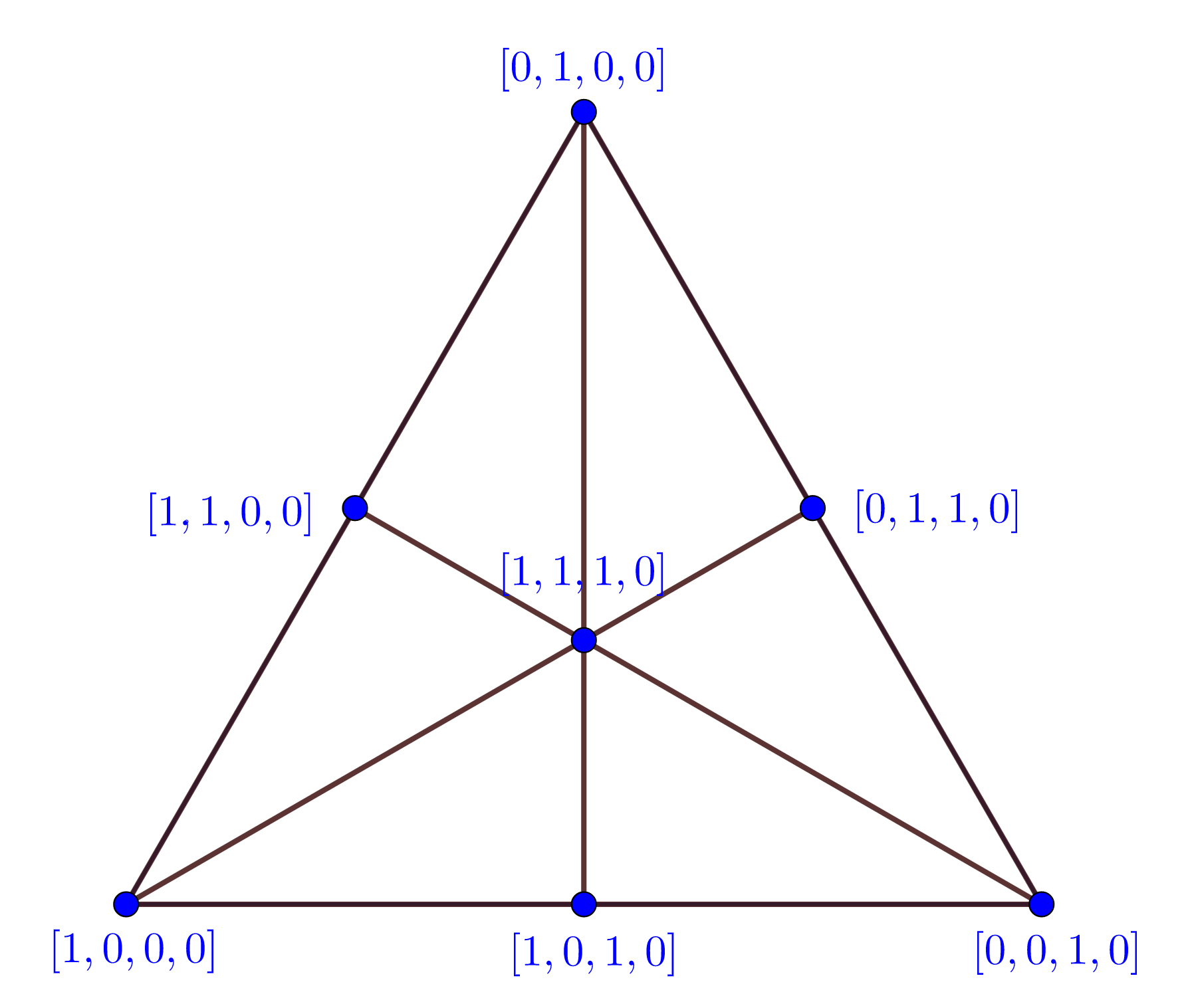}
	\end{center}
	\caption{Configuration of 7 eigenpoints.}
	\label{picture: configuration of 7 coplanar eigenpoints}
\end{figure}

\begin{es}\label{example: rank 4, smooth case}
	The regular eigenpoints of $x_0^3+x_1^3+x_2^3+x_3^3$ are
	\begin{align*}
	[1,0,0,0], [0,1,0,0], [0,0,1,0],[0,0,0,1], [1,1,0,0],\\ 
	[1,0,1,0],[0,1,1,0], [1,1,1,0], [1,0,0,1], [0,1,0,1],\\ 
	[0,0,1,1], [1,1,0,1], [1,0,1,1], [0,1,1,1], [1,1,1,1].
	\end{align*}
	They are not in general position. Each coordinate plane contains exactly 7 of these points arranged in the configuration of Figure~\ref{picture: configuration of 7 coplanar eigenpoints}.
\end{es}

\section{A Grassmannian as a parameter space of eigenschemes} \label{sec: grassmanian}
	
	We turn our attention to the problem of classifying which sets of points are the eigenpoints of a cubic homogeneous polynomial in four variables. We proceed indirectly by first studying the problem for partially symmetric tensors to highlight the underlying geometry, and recover the result for symmetric tensors by specialization. The main idea is as follows. Let $\mathcal{T} = (q_0,\ldots, q_3)$ be a partially symmetric tensor. By construction, the polynomials $q_i-\lambda x_i$ are linearly independent, so 
		\[
			H_\mathcal{T} := 
			\mathrm{Span}_{\mathbb{C}}(q_0-\lambda x_0,\ldots,q_3-\lambda x_3) \tag{$\ast$} \label{eq: 3-plane}
		\]
		is a 3-plane in $\p^{14}=\p(\C[x_0,\dots,x_3,\lambda]_2)$.	On the other hand, given a $3$-plane in $\p^{14}$, the intersection of the dual $10$-plane with the image of the Veronese $\nu_2\: \p^4 \hookrightarrow \p^{14}$ defines a subscheme of $\p^4$, which is generically $0$-dimensional and of degree $16$. We describe the conditions under which a $3$-plane in $\p^{14}$ is of the form \eqref{eq: 3-plane} in terms of the Pl\"ucker coordinates.
		
	\begin{thm} Fix the monomial basis $\{x_0^2,x_0x_1,\dots,x_3^2, x_0\lambda,\dots,x_3\lambda,\lambda^2\}$ of\\
	$\C[x_0,\dots,x_3,\lambda]_2$. The morphism $\Sym^2 \C^{4} \otimes \C^{4}\longrightarrow \Gr(3,\p^{14})$ given by $\mathcal{T} \mapsto [H_\mathcal{T}]$ is an isomorphism onto its image. In the Pl\"ucker coordinates with respect to this basis, this image is the subscheme of the Grassmannian defined by the following conditions:
		\begin{enumerate}
			\item all the entries of the column of $H_\mathcal{T}$ corresponding to $\lambda^2$ are zero, and
			\item
				the Pl\"ucker coordinate corresponding to the columns of $H_\mathcal{T}$ labelled by $\{ x_0\lambda, x_1 \lambda, x_2 \lambda, x_3\lambda\}$ is non-zero.
		\end{enumerate}
		If $\nu_2\: \p^4 \hookrightarrow \p^{14}$ is the Veronese embedding, then $\nu_2(\widetilde E(\mathcal{T})) = H_\mathcal{T}^\vee \cap \nu_2(\p^4)$.
	\end{thm}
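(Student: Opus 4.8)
The plan is to exploit the rigid block structure of the chosen generators of $H_\T$. Fixing the prescribed monomial basis, let $M_\T$ be the $4\times 15$ matrix whose $i$-th row is the coordinate vector of $q_i-\lambda x_i$. Since $q_i$ involves only the ten degree-two monomials in $x_0,\dots,x_3$ and the term $-\lambda x_i$ contributes a single $-1$ in the column labelled $x_i\lambda$, this matrix has the block form
\[
M_\T=\begin{pmatrix} Q & -I_4 & 0\end{pmatrix},
\]
where $Q\in\C^{4\times 10}$ records the coefficients of $q_0,\dots,q_3$, the middle block $-I_4$ occupies the columns $\{x_0\lambda,\dots,x_3\lambda\}$, and the last column, labelled $\lambda^2$, is zero. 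The assignment $\T\mapsto Q$ is a linear isomorphism $\Sym^2\C^4\otimes\C^4\xrightarrow{\sim}\C^{4\times 10}$. The $4\times 4$ minors of $M_\T$ are the Plücker coordinates of $[H_\T]$ and are polynomial in the entries of $Q$, so $\T\mapsto[H_\T]$ is a morphism; moreover the Plücker coordinate $p_{\{x_0\lambda,\dots,x_3\lambda\}}=\det(-I_4)=1$ never vanishes, so this morphism factors through the standard affine chart $U\subset\Gr(3,\p^{14})$ on which that coordinate is invertible.

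First I would establish the isomorphism onto the image. On $U$ every $3$-plane has a unique representative matrix whose $\{x_0\lambda,\dots,x_3\lambda\}$ columns form the identity; in this reduced form the remaining $44$ entries — the $4\times 10$ block, whose entries I call $R$, together with the $4\times 1$ block, whose entries I call $s=(s_0,\dots,s_3)$ — are affine coordinates giving $U\cong\A^{44}$. Negating the rows of $M_\T$ puts it in this reduced form, so in these coordinates the morphism reads $Q\mapsto(R,s)=(-Q,0)$. This is a closed immersion of $\A^{40}=\Sym^2\C^4\otimes\C^4$ onto the linear subspace $\{s=0\}\subset U$, with regular inverse $R\mapsto Q=-R$ recovering $\T$; hence $\T\mapsto[H_\T]$ is an isomorphism onto its image.

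It then remains to identify $\{s=0\}\subset U$ with the subscheme cut out by the two stated conditions. Condition (2) is by definition the open set $U$. Condition (1) asks that the linear coordinate $\lambda^2$ vanish on $H_\T$, equivalently that every Plücker coordinate $p_S$ with $\lambda^2\in S$ be zero. Restricted to $U$, those $p_S$ obtained from $\{x_0\lambda,\dots,x_3\lambda\}$ by exchanging one pivot column for the $\lambda^2$ column are, up to sign, precisely $s_0,\dots,s_3$, while every other $p_S$ with $\lambda^2\in S$ is a minor of the reduced matrix containing the $\lambda^2$-column $s$ and hence lies in the ideal $(s_0,\dots,s_3)$. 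Thus condition (1) cuts out exactly $V(s_0,\dots,s_3)$ in $U$, matching the image scheme-theoretically and proving both the Plücker description and that the image is the asserted subscheme.

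For the final identity I would unwind the duality. Under the monomial identification $\p^{14}=\p(\C[x_0,\dots,x_3,\lambda]_2)$, a quadratic form $q$ with coefficient vector $c$ and a Veronese point $\nu_2(v)$ with coordinate vector $(m_\alpha(v))_\alpha$ satisfy $\langle c,\nu_2(v)\rangle=\sum_\alpha c_\alpha m_\alpha(v)=q(v)$, so the standard bilinear pairing computes evaluation. The dual $H_\T^\vee$ is the $10$-plane $\{[z]:\langle q,z\rangle=0\ \forall q\in H_\T\}=V(L_0,\dots,L_3)$, where $L_i=\langle q_i-\lambda x_i,-\rangle$; since $\nu_2^*L_i=q_i-\lambda x_i$, a Veronese point $\nu_2(v)$ lies on $H_\T^\vee$ exactly when $(q_i-\lambda x_i)(v)=0$ for all $i$, i.e. when $v\in\widetilde E(\T)$. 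As $\nu_2$ is a closed immersion, $\nu_2^{-1}(H_\T^\vee)=\widetilde E(\T)$ scheme-theoretically and therefore $\nu_2(\widetilde E(\T))=H_\T^\vee\cap\nu_2(\p^4)$. I expect the main obstacle to be the scheme-theoretic bookkeeping in the Plücker step — verifying that, on $U$, the full family of equations $\{p_S=0:\lambda^2\in S\}$ generates exactly $(s_0,\dots,s_3)$ and introduces no spurious structure — rather than merely the set-theoretic vanishing of the $\lambda^2$-coordinate; pinning down the self-duality pairing so that $\langle q,\nu_2(v)\rangle=q(v)$, and hence the precise meaning of $H_\T^\vee$, is the only delicate point in the last part.
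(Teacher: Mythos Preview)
Your proof is correct and follows essentially the same route as the paper: both exploit the block form $M_\T=(Q\mid -I_4\mid 0)$ and the row reduction on the affine chart where the $\{x_0\lambda,\dots,x_3\lambda\}$-minor is invertible to produce the inverse $H\mapsto\T$. Your version is more scrupulous---you verify scheme-theoretically that on this chart the Pl\"ucker equations $\{p_S=0:\lambda^2\in S\}$ generate exactly $(s_0,\dots,s_3)$, and you spell out the evaluation pairing $\langle q,\nu_2(v)\rangle=q(v)$ underlying the Veronese identity---whereas the paper simply row-reduces and declares the last statement ``clear.''
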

		
	\begin{proof}
		If $\mathcal{T}=(q_0,q_1,q_2,q_3)$ is a partially symmetric tensor, then the $3$-plane $H_\mathcal{T} := \mathrm{Span}_{\mathbb{C}}(q_0-\lambda x_0,\ldots,q_3-\lambda x_3)$ satisfies conditions (1) and (2). We show that the morphism, when restricted to the image, has an inverse. A $3$-plane in $\mathrm{Gr}(3,\p^{14})$ satisfying conditions (1) and (2) is a subspace represented by a $4\times 15$ matrix
		\[
		\kbordermatrix{
			& x_{0}^2 & x_{0}x_{1} & \ldots & x_{3}^2 & x_{0}\lambda& x_{1}\lambda & x_{2}\lambda & x_{3}\lambda& \lambda^2\\
			& m_{1,1} & m_{1,2} & \ldots & m_{1,10} & m_{1,11}&m_{1,12}&m_{1,13}&m_{1,14}&m_{1,15} \\
			& m_{2,1} & m_{2,2} & \ldots & m_{2,10} & m_{2,11}&m_{2,12}&m_{2,13}&m_{2,14}&m_{2,15} \\
			& m_{3,1} & m_{3,2} & \ldots & m_{3,10} & m_{3,11}&m_{3,12}&m_{3,13}&m_{3,14}&m_{3,15}  \\
			& m_{4,1} & m_{4,2} & \ldots & m_{4,10} & m_{4,11}&m_{4,12}&m_{4,13}&m_{4,14}&m_{4,15} 
		}.
		\]	
		By the hypothesis the entries in the column labelled by $\lambda^2$  are zero and the $4\times 4$~block
		\[
		\kbordermatrix{
			&  x_{0}\lambda& x_{1}\lambda & x_{2}\lambda & x_{3}\lambda\\
			&  m_{1,11}&m_{1,12}&m_{1,13}&m_{1,14} \\
			& m_{2,11}&m_{2,12}&m_{2,13}&m_{2,14} \\
			& m_{3,11}&m_{3,12}&m_{3,13}&m_{3,14} \\
			&  m_{4,11}&m_{4,12}&m_{4,13}&m_{4,14} 
		}
		\]
		is invertible
		, we can apply the reduced row echelon form to get
		\[
		\kbordermatrix{
			& x_{0}^2 & x_{0}x_{1} & \ldots & x_{3}^2 & x_{0}\lambda& x_{1}\lambda & x_{2}\lambda & x_{3}\lambda& \lambda^2\\
			& \tilde{m}_{1,1} & \tilde{m}_{1,2} & \ldots & \tilde{m}_{1,10} & -1&0&0&0&0 \\
			& \tilde{m}_{2,1} & \tilde{m}_{2,2} & \ldots & \tilde{m}_{2,10} & 0&-1&0&0&0 \\
			& \tilde{m}_{3,1} & \tilde{m}_{3,2} & \ldots & \tilde{m}_{3,10} & 0&0&-1&0&0  \\
			& \tilde{m}_{4,1} & \tilde{m}_{4,2} & \ldots & \tilde{m}_{4,10} & 0&0&0&-1&0
		}.
		\]This is a $3$-plane given by a tensor. The statement about $\widetilde{E}(\mathcal{T})$ is clear.
	\end{proof}


	\begin{cor} \label{cor: symmetric eigenschemes as 3-planes}
		The parameter space of eigenschemes of symmetric tensors is an open subvariety of a linear subspace of $\Gr(3,\p^{14})$. 
	\end{cor}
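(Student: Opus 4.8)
The plan is to read the corollary off the theorem, the only new input being that ``symmetric'' is a linear condition which survives passage to the Grassmannian. By the theorem, $\mathcal{T}\mapsto[H_\mathcal{T}]$ identifies $\Sym^2\C^4\otimes\C^4$ with the locally closed subvariety $\mathcal{V}\subset\Gr(3,\p^{14})$ cut out by the linear conditions (1) and the open condition (2); in particular $\mathcal{V}$ is an open subvariety of the sub-Grassmannian $\Gr(3,\p^{13})$ of $3$-planes contained in the hyperplane $\{\lambda^2=0\}$, which is itself a linear section of $\Gr(3,\p^{14})$ (the Plücker coordinates involving the index $\lambda^2$ vanish on it). Symmetric tensors form the linear subspace $\Sym^3\C^4\subset\Sym^2\C^4\otimes\C^4$: a tensor $\mathcal{T}=(q_0,\dots,q_3)$ lies in it exactly when $q_i=\partial f/\partial x_i$ for a common cubic $f$, i.e. when the coefficients of the $q_i$ satisfy the linear equations $\partial q_i/\partial x_j=\partial q_j/\partial x_i$. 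So it suffices to show that, under $\mathcal{T}\mapsto[H_\mathcal{T}]$, these linear equations become linear equations in the Plücker coordinates of $\Gr(3,\p^{14})$ on the open set where (2) holds.

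To do this I would work in the affine chart $U=\{p_0\neq0\}$ of $\Gr(3,\p^{14})$, where $p_0$ is the Plücker coordinate indexed by the columns $\{x_0\lambda,x_1\lambda,x_2\lambda,x_3\lambda\}$; this is precisely condition (2). As in the proof of the theorem, every point of $\mathcal{V}\cap U$ has a unique representative in reduced row echelon form $[\,\widetilde M\mid -I_4\mid 0\,]$ with pivots in the four columns $\{x_i\lambda\}$, where the rows of $\widetilde M$ are the coefficient vectors of $q_0,\dots,q_3$ in the monomials $x_0^2,x_0x_1,\dots,x_3^2$. The standard description of this big cell says that the entries $\widetilde m_{i,k}$ are, up to sign, the affine Plücker coordinates $p_J/p_0$, where $J$ is obtained from $\{x_0\lambda,\dots,x_3\lambda\}$ by replacing the $i$-th pivot index with the $k$-th monomial index: because the pivot block equals $-I_4$, such a maximal minor collapses to the single entry $\pm\widetilde m_{i,k}$. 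Thus the coefficients of the quadrics $q_i$ are, up to the fixed scalar $p_0$, honest Plücker coordinates of $[H_\mathcal{T}]$.

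Consequently each linear relation $\partial q_i/\partial x_j=\partial q_j/\partial x_i$, after multiplying through by $p_0$, becomes a linear relation among the Plücker coordinates of $\Gr(3,\p^{14})$; let $\Lambda$ be the linear subspace of the Plücker ambient space cut out by these relations together with the linear equations defining $\Gr(3,\p^{13})$. Unwinding the identification $\mathcal{V}\cap U\cong\Sym^2\C^4\otimes\C^4$, the image of $\Sym^3\C^4$ under $\mathcal{T}\mapsto[H_\mathcal{T}]$ is then exactly $\Gr(3,\p^{14})\cap\Lambda\cap U$, an open subvariety of the linear subspace $\Gr(3,\p^{14})\cap\Lambda$ of $\Gr(3,\p^{14})$; and since by the theorem $\widetilde E(f)$, hence $E(f)$, is recovered from $[H_f]$ via $\nu_2(\widetilde E(f))=H_f^\vee\cap\nu_2(\p^4)$, this locus is the asserted parameter space. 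I expect the only delicate point to be the identification in the previous paragraph of the big-cell coordinates $\widetilde m_{i,k}$ with normalized Plücker coordinates: this is what upgrades ``linear in the chart'' to ``linear section of the Grassmannian'', and everything else is a formal consequence of the linearity of the symmetry conditions.
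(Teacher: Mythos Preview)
Your argument is correct and follows essentially the same approach as the paper: both reduce to showing that the linear symmetry constraints $\partial q_i/\partial x_j=\partial q_j/\partial x_i$ on the entries $\widetilde m_{i,k}$ of the echelon form become linear relations among the Pl\"ucker coordinates. The only difference is that where you explicitly identify each $\widetilde m_{i,k}$ with a normalized Pl\"ucker coordinate $p_J/p_0$ on the big cell, the paper outsources this step to \cite[Proposition~3.1.2]{GKZ2008discriminants}; your direct argument is precisely the content of that citation.
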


	\begin{proof}
		
		Let 
		\[
		\kbordermatrix{
			& x_{0}^2 & x_{0}x_{1} & \ldots & x_{3}^2 & x_{0}\lambda& x_{1}\lambda & x_{2}\lambda & x_{3}\lambda& \lambda^2\\
			& m_{1,1} & m_{1,2} & \ldots & m_{1,10} & -1&0&0&0&0 \\
			& m_{2,1} & m_{2,2} & \ldots & m_{2,10} & 0&-1&0&0&0 \\
			& m_{3,1} & m_{3,2} & \ldots & m_{3,10} & 0&0&-1&0&0  \\
			& m_{4,1} & m_{4,2} & \ldots & m_{4,10} & 0&0&0&-1&0
		}
		\]
		be the affine coordinates for a $3$-plane coming from a symmetric tensor. The $4$ quadrics corresponding to the rows are of the form $\frac{\partial f}{\partial x_0} - \lambda x_0, \ldots, \frac{\partial f}{\partial x_3} - \lambda x_3$ if and only if the $m_{i,j}$, interpreted as coefficients of the quadrics $q_0,\ldots, q_3$, satisfy the linear conditions $\{ \frac{\partial q_i}{\partial x_j} - \frac{\partial q_j}{\partial x_i} = 0 : i < j \}$. These give linear relations of the Pl\"ucker coordinates by \cite[Proposition~3.1.2]{GKZ2008discriminants}.
	\end{proof}
	
	As shown in \cite[Section 4]{AboSeigalSturmfels}, the eigendiscriminant is a homogeneous polynomial of degree $96$ in the entries of a tensor, that vanishes whenever the eigenscheme has a point of multiplicity greater than or equal to $2$ or the eigenscheme is positive dimensional. It is interesting to compare the eigendiscriminant to the Hurwitz form of the image of $\nu_2 \: \p^4 \hookrightarrow \p^{14}$, which is a polynomial in the Pl\"ucker cooordinates for $\Gr(3,\p^{14})$ that vanishes on the $3$-dimensional planes that intersect the Veronese tangentially~\cite[Theorem 1.1]{Sturmfels2016hurwitz}. As the definitions of these two polynomials are closely related, we make the following conjecture.
	
	\begin{conj}\label{conjHurwitz}
		Restricted to the $3$-planes coming from eigenschemes, the eigendiscriminant divides the Hurwitz form. 
	\end{conj}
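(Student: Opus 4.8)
The plan is to prove the divisibility by comparing zero loci. Via the isomorphism $\mathcal{T}\mapsto[H_\mathcal{T}]$ of the Theorem, eigenschemes are parametrised by a linear subvariety $Z\subset\Gr(3,\p^{14})$ on which the Pl\"ucker coordinate labelled by $\{x_0\lambda,\dots,x_3\lambda\}$ is the nonzero constant $1$; in that chart every Pl\"ucker coordinate of $H_\mathcal{T}$ is a polynomial in the entries of $\mathcal{T}$. Under the duality $\Gr(3,\p^{14})\cong\Gr(10,\p^{14})$, which only permutes and signs Pl\"ucker coordinates, the Hurwitz form $\mathrm{Hu}$ of $\nu_2(\p^4)\subset\p^{14}$ becomes a polynomial in these coordinates, and its pullback along $\mathcal{T}\mapsto[H_\mathcal{T}]$ is a polynomial $G$ in the entries of $\mathcal{T}$; this $G$ is precisely what ``the Hurwitz form restricted to the $3$-planes coming from eigenschemes'' means. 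Writing $\Delta$ for the eigendiscriminant, we want $\Delta\mid G$.

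The key step is the dictionary: for every $\mathcal{T}$, the scheme $E(\mathcal{T})$ fails to be $2^4-1=15$ distinct reduced points if and only if $H_\mathcal{T}^{\vee}$ is tangent to $\nu_2(\p^4)$ or meets it in positive dimension. To see this, recall from Lemma~\ref{lem:nonempty}(\ref{bullet: at least an eigenpoint}) that $[0,\dots,0,1]$ is a reduced isolated point of $\widetilde E(\mathcal{T})$ for every $\mathcal{T}$; in the chart $x_k=1$ one eliminates $\lambda$ by $\lambda=q_k(x)$, which turns the ideal $(q_i-\lambda x_i)$ into the ideal $(q_i-q_k x_i : i\neq k)$ of the $2\times2$ minors of Definition~\ref{def:eigenscheme}. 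Hence the projection from $[0,\dots,0,1]$ identifies $\widetilde E(\mathcal{T})\setminus\{[0,\dots,0,1]\}$ with $E(\mathcal{T})$ scheme-theoretically, so $\widetilde E(\mathcal{T})$ is positive-dimensional or has a point of multiplicity $\ge2$ exactly when $E(\mathcal{T})$ is. Since $\nu_2$ is a closed embedding and $\nu_2(\widetilde E(\mathcal{T}))=H_\mathcal{T}^{\vee}\cap\nu_2(\p^4)$ by the Theorem, the displayed equivalence follows; and by the defining property of the Hurwitz form, its right-hand side says exactly $G(\mathcal{T})=0$. In other words $V(G)$ is the locus of tensors with degenerate eigenscheme, and the vanishing locus of the eigendiscriminant is (the closure of) the same locus, so $V(\Delta)\subseteq V(G)$.

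To finish, $G\not\equiv0$: by Lemma~\ref{lem:nonempty}(\ref{bullet: count general eigenpoints}) the general $\mathcal{T}$ has $E(\mathcal{T})$ equal to $15$ reduced points, so $\widetilde E(\mathcal{T})$ consists of $16$ reduced points, $H_\mathcal{T}^{\vee}$ meets $\nu_2(\p^4)$ transversally, and $\mathrm{Hu}(H_\mathcal{T}^{\vee})\neq0$. The eigendiscriminant is squarefree, being the reduced equation of the degeneracy hypersurface; since $V(\Delta)\subseteq V(G)$ and $G\neq0$, every irreducible component of the hypersurface $V(\Delta)$ is an irreducible component of $V(G)$, and therefore $\Delta\mid G$.

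Two points need care. One must verify that the vanishing locus of the eigendiscriminant really is the closure of the eigenscheme-degeneracy locus, with no extraneous factor, which rests on the construction in \cite[Section~4]{AboSeigalSturmfels}; and one must check the scheme-theoretic identification $\widetilde E(\mathcal{T})\setminus\{[0,\dots,0,1]\}\cong E(\mathcal{T})$ carefully enough that ``degenerate'' transports in both directions. These are routine but not automatic. The harder question lies beyond the conjecture as stated: to pin down the precise relation --- whether $G=c\,\Delta^{k}$ and the value of $k$ --- one would compute $\deg G$ from the degree of the Hurwitz form of $\nu_2(\p^4)$, via the formulas of \cite{Sturmfels2016hurwitz}, together with the degree of the parametrisation $\mathcal{T}\mapsto[H_\mathcal{T}]$, and compare with $96$. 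I expect determining $k$ to be the real obstacle, which is presumably why only divisibility is conjectured.
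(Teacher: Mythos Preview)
The statement is a \emph{conjecture}; the paper does not prove it. What the paper does is verify the analogue for binary cubics by direct symbolic computation: it writes down the Hurwitz form of $\nu_2(\p^2)\subset\p^5$ in Pl\"ucker coordinates, substitutes the Pl\"ucker coordinates of the line $H_f$ attached to $f=a_0x_0^3+\dots+a_3x_1^3$, and observes that the resulting quartic in $a_0,\dots,a_3$ agrees (up to sign) with the eigendiscriminant from \cite[Example~4.4]{AboSeigalSturmfels}. No general argument is offered for $n=3$.

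Your proposal is therefore far more ambitious than anything in the paper: you attempt a conceptual proof valid for all $n$. The geometric core of your argument is correct and clean. The identification $\nu_2(\tilde E(\mathcal{T}))=H_\mathcal{T}^\vee\cap\nu_2(\p^4)$ together with Lemma~\ref{lem:nonempty}(\ref{bullet: at least an eigenpoint}) does give $V(G)=V(\Delta)$ set-theoretically, and your check that $G\not\equiv 0$ is fine. If $\Delta$ were known to be squarefree, the divisibility would follow immediately by the Nullstellensatz, exactly as you say.

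The genuine gap is the one you flag yourself: that the eigendiscriminant of \cite[Section~4]{AboSeigalSturmfels} is the \emph{reduced} equation of the degeneracy hypersurface. That polynomial is introduced there via a specific resultant-type construction with a prescribed degree (here $96$), not as ``the squarefree polynomial cutting out the locus''. Whether it is irreducible, or even squarefree, is precisely what would need to be established, and the paper's authors evidently did not regard this as settled --- otherwise your argument, which is short, would have turned the conjecture into a proposition. So your outline reduces Conjecture~\ref{conjHurwitz} to a statement about the eigendiscriminant alone; that reduction is valuable and goes well beyond the paper's computational check, but it does not yet close the conjecture.
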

	
	We can verify Conjecture~\ref{conjHurwitz} for the eigenschemes of binary cubic forms. In this case, the eigenscheme defines a line in $\p^5=\p(\C[x_0,x_1,\lambda]_2)$. We consider the Hurwitz form of the image of $\nu_2 \: \p^2 \hookrightarrow \p^{5}$ in the coordinate ring $\Gr(1,\p^5)$. This is a polynomial of degree 6 in Pl\"ucker coordinates. If we express an element in $\Gr(1,\p^5)$ as the image of the transpose of a $2\times 6$ matrix, the primal Pl\"ucker coordinates are the maximal minors of the $2\times 6$ matrix.  We compute the Hurwitz form in the primal Pl\"ucker coordinates in the \texttt{Macaulay2} package "Resultants". Now, we restrict it to the eigenscheme as follows. Suppose we are given a binary cubic form $f=a_0x_0^3+a_1x_0^2x_1+a_2x_0x_1^2+a_3x_1^3$. The line associated to $f$ is given by the image of the transpose of the following $2\times 6$ matrix

\[
		\begin{pmatrix}
		3a_0 & 2a_1 &-1 & a_2 & 0 & 0 \\
		a_1 & 2a_2 & 0 & 3a_3 & -1 & 0
		\end{pmatrix}.
		\]

The primal Stiefel coordinates are the entries, and the primal Pl\"ucker coordinates are the $2 \times 2 $ minors of this matrix. Substituting the Pl\"ucker coordinates into the Hurwitz form we have 
\begin{align*}
&36a_0^2a_1^2+32a_1^4-108a_0^3a_2-156a_0a_1^2a_2+216a_0^2a_2^2+61a_1^2a_2^2-144a_0a_2^3+32a_2^4 \\ &-108a_0^2a_1a_3-144a_1^3a_3+306a_0a_1a_2a_3-156a_1a_3^2a_3+81a_0^2a_3^2+216a_1^2a_3^2\\ & \qquad \qquad \qquad \qquad-108a_1a_2a_3^2+36a_2^2a_3^2-108a_1a_3^3.
\end{align*}

On the other hand, we compute the eigendiscriminant in terms of the coordinates in $\p^5$ \cite[Example 4.4]{AboSeigalSturmfels}. In this case, the eigendiscriminant and the Hurwitz form are equal up to a factor of $(-1)$. The accompanying \texttt{Macaulay2} \cite{M2} script is available at the link~\cite{stefana-website}.

\section*{Acknowledgments}

The authors thank Anna Seigal and Simon Telen for helpful discussions. We would like to thank Marta Panizzut, Kristian Ranestad, and Emre Sert\"oz for their organization of the cubic surfaces day at the University of Oslo, as well as for organizing the intermediate meetings. Finally, we extend our gratitude to Bernd Sturmfels for inspiring the project. This article is based on Question 16 of the website \url{http://cubics.wikidot.com}.

\nocite{*}
\bibhere

\end{document}